\def\no{\noindent}
\def\pmatrix{\left(\begin{array}}
\def\endpmatrix{\end{array}\right)}
\newtheorem{theo}{Theorem}
\newtheorem{lem}{Lemma}
\newtheorem{rem}{Remark}
\newtheorem{defi}{Definition}
\title{Arbitrary-order  functionally fitted energy-diminishing methods for gradient systems}%\thanks{This work is supported by Universit\`a di Firenze (project ``Risoluzione numerica di problemi Hamiltoniani ed applicazioni'') and NSFC (Grant No.\,11571128).}}
\author{Bin Wang\,
\footnote{School of Mathematical Sciences, Qufu Normal University,
Qufu 273165, P.R. China; Mathematisches Institut, University of
T\"{u}bingen, Auf der Morgenstelle 10, 72076 T\"{u}bingen, Germany.
The research is supported in part by the Alexander von Humboldt
Foundation and by the Natural Science Foundation of Shandong
Province (Outstanding Youth Foundation) under Grant ZR2017JL003.
E-mail:~{\tt wang@na.uni-tuebingen.de} } \and Ting Li\thanks{School
of Mathematical Sciences, Qufu Normal University, Qufu 273165, P.R.
China. E-mail:~{\tt 1009587520@qq.com}} \and Yajun Wu\thanks{School
of Mathematical Sciences, Qufu Normal University, Qufu 273165, P.R.
China. E-mail:~{\tt 1921170786@qq.com}} }
\begin{document}
\maketitle

\begin{abstract} It is well known that for gradient systems in Euclidean space or on
a Riemannian manifold, the energy decreases monotonically along
solutions. In this letter we derive and analyse functionally fitted
energy-diminishing methods to preserve this key property of gradient
systems. It is proved that the novel methods are unconditionally
energy-diminishing and can achieve damping for very stiff gradient
systems. We also show that the methods can be of arbitrarily high
order and discuss their implementations. A numerical test is
reported to illustrate the efficiency of the  new methods in
comparison with three existing numerical methods in the literature.

\medskip
\no{\bf Keywords:}  gradient systems, energy-diminishing methods,
functionally fitted methods, arbitrary-order methods

\medskip
\no{\bf MSC:}65L05

\end{abstract}

%=============================================================================================
\section{Introduction}
In this letter, we investigate   the  following  gradient systems in
coordinates:
\begin{equation}\label{IVPPP-G}
G(y(t))\dot{y}(t)=- \nabla U(y(t)),\quad
y(0)=y_{0}\in\mathbb{R}^{d},\quad t\in[0,T],
\end{equation}
where $U(y): \mathbb{R}^d\rightarrow \mathbb{R}$ is a potential
function  and the symmetric matrix $G(y)$ is assumed to satisfy
\begin{equation*}\label{gy}
 v^{\intercal} G(y) v \geq  v^{\intercal} \Gamma v>0
\end{equation*}
for all vectors $v\neq 0.$ Here $\Gamma $ is a fixed positive
definite matrix.

Gradient systems frequently  arise in a wide variety of applications
both  in a finite-dimensional and infinite-dimensional setting.
There are many examples of this system (see, e.g.
\cite{Bao04,Chen02,Droske04,Michailovich07,Otto01,Penzler12,Strzodka04})
such as models in quantum systems,  in differential geometry,  in
image processing, and in material science. A fundamental and key
property of gradient systems is that along every exact solution of
\eqref{IVPPP-G},  one has
\begin{equation*}\label{energy-dis}
 \frac{d}{dt}U(y(t))= \nabla
 U(y(t))^{\intercal}\dot{y}(t)=-\dot{y}(t)^{\intercal}G(y(t))\dot{y}(t)\leq0,
\end{equation*}
which shows that $U(y(t))$ is monotonically decreasing  with $t$.
The monotonicity is true with strict inequality except at stationary
points of $U$. The aim of this letter is to formulate and analyse a
novel kind of methods preserving this monotonicity in the numerical
treatment, i.e., after one step of the method starting from $y_0$
with a time step $h$ one would have $$U(y_1)\leq U(y_0).$$

In order to get methods with this property,  Hairer and Lubich
 analysed various energy-diminishing  methods in \cite{hairer2013}.
 They showed that implicit Euler method has this property but it  is only of order
 one.  Algebraically stable Runge-Kutta methods were proved to reduce the energy in each
step under a mild step-size restriction, which means that
Runge-Kutta methods are not unconditionally energy-diminishing.
 They also showed that discrete-gradient methods,  averaged vector field (AVF) methods and AVF collocation
methods are unconditionally energy-diminishing, but cannot achieve
damping for very stiff gradient systems. In this letter, we will
derive a novel kind of methods which  can  be
 of arbitrarily high order.
 Moreover, the methods will be shown that they
 are unconditionally energy-diminishing  and are strongly damped even
for very stiff gradient systems.

The rest of this letter is organised as follows. In Section
\ref{sec-fomu}, we derive the novel methods and prove that they are
unconditionally
 energy-diminishing for gradient systems. The unconditionally damping
property is analysed in  Section \ref{sec-Damping}.    We study the
order of the methods in Section \ref{sec-order}. Section
\ref{sec-implementations} is devoted to the implementation issue. In
Section \ref{sec-nume}, a numerical test is carried out to
 demonstrate the   excellent qualitative behavior.
Section \ref{sec-conc} focuses on the concluding remarks.

\section{Functionally fitted energy-diminishing methods}\label{sec-fomu}
In order to formulate the novel methods, we will use the
functionally fitted technology, which  is a popular approach to
constructing  efficient and effective methods in scientific
computing (see, e.g. \cite{Li_Wu(na2016),17-new}). To this end,
  define a function space
$Y$=span$\left\{\varphi_{0}(t),\ldots,\varphi_{r-1}(t)\right\}$ on
$[0,T]$
  by (see \cite{Li_Wu(na2016)})
\begin{equation*}
Y=\left\{w : w(t)=\sum_{i=0}^{r-1}\varphi_{i}(t)W_{i},\ t\in I,\
W_{i}\in\mathbb{R}^{d}\right\},
\end{equation*}
where $\{\varphi_{i}(t)\}_{i=0}^{r-1}$ are sufficiently smooth and
linearly independent on $[0,T]$.  In this letter, we consider   two
finite-dimensional function spaces  $Y$ and $X$ as follows
\begin{equation*}\label{FF}
Y=\text{span}\left\{\varphi_{0}(t),\ldots,\varphi_{r-1}(t)\right\},\quad
X=\text{span}\left\{1,\int_{0}^{t}\varphi_{0}(s)ds,\ldots,\int_{0}^{t}\varphi_{r-1}(s)ds\right\}.
\end{equation*}
Choose a stepsize $h$ and define the function spaces $Y_{h}$ and
$X_{h}$ on $[0,1]$ by
\begin{equation*}%\label{FF-h}
Y_{h}=\text{span}\left\{\tilde{\varphi}_{0}(\tau),\ldots,\tilde{\varphi}_{r-1}(\tau)\right\},\quad
X_{h}=\text{span}\left\{1,\int_{0}^{\tau}\tilde{\varphi}_{0}(s)ds,\ldots,\int_{0}^{\tau}\tilde{\varphi}_{r-1}(s)ds\right\},
\end{equation*}
 where $\tilde{\varphi}_{i}(\tau)=\varphi_{i}(\tau h),\ \tau\in[0,1]$
for $i=0,1,\ldots,r-1$. We remark that  for all the functions
throughout this letter,  the notation $\tilde{f} (\tau)$ is referred
to $f(\tau h)$.

We will use a projection $\mathcal{P}_{h}$  in the formulation of
the new methods.  It was defined in \cite{Li_Wu(na2016)} and we
summarise it here.
\begin{defi}\label{def projection} (See \cite{Li_Wu(na2016)})
Let $\mathcal{P}_{h}\tilde{w}$ be a projection of $\tilde{w}$ onto
$Y_{h}$, where $\tilde{w}(\tau)$ is a continuous
$\mathbb{R}^{d}$-valued function on $[0,1]$.
  The definition of
$\mathcal{P}_{h}\tilde{w}$ is given by
\begin{equation*}%\label{DEF}
\langle
\tilde{v}(\tau),\mathcal{P}_{h}\tilde{w}(\tau)\rangle=\langle
\tilde{v}(\tau),\tilde{w}(\tau)\rangle\quad \text{for any}\ \
\tilde{v}(\tau)\in Y_{h},
\end{equation*}
where   the inner product $\langle\cdot,\cdot\rangle$ is defined by
\begin{equation*}
\langle \tilde{w}_{1},\tilde{w}_{2}\rangle=\langle
\tilde{w}_{1}(\tau),\tilde{w}_{2}(\tau)\rangle_{\tau}=\int_{0}^{1}\tilde{w}_{1}(\tau)\cdot
\tilde{w}_{2}(\tau)d\tau.
\end{equation*}
Here $\tilde{w}_{1}$ and $\tilde{w}_{2}$ are two integrable
functions (scalar-valued or vector-valued)
 on $[0,1]$, and  `$\cdot$' denotes the entrywise multiplication
operation if they are  both vector-valued functions.
\end{defi}

The  following property of  $\mathcal{P}_{h}$ will also be useful in
this letter, which has been proved in \cite{Li_Wu(na2016)}.

\begin{lem}\label{proj lem}(See \cite{Li_Wu(na2016)}) The projection $\mathcal{P}_{h}\tilde{w}$ can be explicitly expressed as
\begin{equation*}
\mathcal{P}_{h}\tilde{w}(\tau)=\langle
P_{\tau,\sigma},\tilde{w}(\sigma)\rangle_{\sigma},
\end{equation*}
where
\begin{equation*}\label{PCOEFF}
P_{\tau,\sigma}=\sum_{i=0}^{r-1}\tilde{\psi}_{i}(\tau)\tilde{\psi}_{i}(\sigma),
\end{equation*}
and $\left\{\tilde{\psi}_{0},\ldots,\tilde{\psi}_{r-1}\right\}$ is a
standard orthonormal basis of $Y_{h}$ under the inner product
$\langle\cdot,\cdot\rangle$.
\end{lem}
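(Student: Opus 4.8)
The plan is to reduce the claim to the classical formula for the orthogonal projection onto a finite-dimensional subspace equipped with an orthonormal basis, the only novelty being that the inner product $\langle\tilde{w}_1,\tilde{w}_2\rangle=\int_0^1\tilde{w}_1(\tau)\cdot\tilde{w}_2(\tau)\,d\tau$ is understood entrywise, so the whole argument is carried out componentwise. First I would note that, because $\mathcal{P}_{h}\tilde{w}$ lies in $Y_{h}$ and $\{\tilde{\psi}_{0},\ldots,\tilde{\psi}_{r-1}\}$ is an orthonormal basis of $Y_{h}$, it admits an expansion
\begin{equation*}
\mathcal{P}_{h}\tilde{w}(\tau)=\sum_{i=0}^{r-1}\tilde{\psi}_{i}(\tau)\,C_{i},\qquad C_{i}\in\mathbb{R}^{d},
\end{equation*}
mirroring the structure $w=\sum_i\varphi_i W_i$ of the space $Y$ itself. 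The task is then simply to identify the coefficients $C_{i}$ and to recognise the resulting sum as an integral against the kernel $P_{\tau,\sigma}$.

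Next I would insert this expansion into the defining Galerkin relation of Definition~\ref{def projection} and test against the basis functions, taking $\tilde{v}=\tilde{\psi}_{j}$. Using bilinearity together with orthonormality, $\langle\tilde{\psi}_{j},\tilde{\psi}_{i}\rangle=\delta_{ij}$, the left-hand side collapses to $C_{j}$, so the defining relation $\langle\tilde{v},\mathcal{P}_{h}\tilde{w}\rangle=\langle\tilde{v},\tilde{w}\rangle$ yields at once
\begin{equation*}
C_{j}=\langle\tilde{\psi}_{j},\mathcal{P}_{h}\tilde{w}\rangle=\langle\tilde{\psi}_{j},\tilde{w}\rangle=\int_{0}^{1}\tilde{\psi}_{j}(\sigma)\,\tilde{w}(\sigma)\,d\sigma .
\end{equation*}
Substituting back and then, since $\tilde{\psi}_{i}(\tau)$ is independent of the integration variable $\sigma$, pulling each factor inside the integral and interchanging the finite sum with the integral gives
\begin{equation*}
\mathcal{P}_{h}\tilde{w}(\tau)=\sum_{i=0}^{r-1}\tilde{\psi}_{i}(\tau)\int_{0}^{1}\tilde{\psi}_{i}(\sigma)\,\tilde{w}(\sigma)\,d\sigma=\int_{0}^{1}\Big(\sum_{i=0}^{r-1}\tilde{\psi}_{i}(\tau)\tilde{\psi}_{i}(\sigma)\Big)\tilde{w}(\sigma)\,d\sigma=\langle P_{\tau,\sigma},\tilde{w}(\sigma)\rangle_{\sigma},
\end{equation*}
which is exactly the asserted representation with $P_{\tau,\sigma}=\sum_{i=0}^{r-1}\tilde{\psi}_{i}(\tau)\tilde{\psi}_{i}(\sigma)$.

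The step I expect to require the most care — and the main conceptual obstacle rather than a computational one — is the bookkeeping of the vector-valued convention. One must verify that the entrywise inner product causes the defining relation to decouple into $d$ independent scalar $L^{2}([0,1])$ conditions, one per component, and that within each component the test directions range freely over $\text{span}\{\tilde{\varphi}_{0},\ldots,\tilde{\varphi}_{r-1}\}=\text{span}\{\tilde{\psi}_{0},\ldots,\tilde{\psi}_{r-1}\}$; only then is $\mathcal{P}_{h}\tilde{w}$ genuinely the componentwise orthogonal projection and the identification $C_{j}=\langle\tilde{\psi}_{j},\tilde{w}\rangle$ legitimate. One should also check that the scalar kernel $P_{\tau,\sigma}$ multiplies the vector $\tilde{w}(\sigma)$ in the correct (scalar-times-vector) sense, so that the $d$ componentwise formulas assemble consistently into the single displayed identity. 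Once this consistency is confirmed, the remaining manipulations are routine linearity of the integral.
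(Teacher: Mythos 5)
Your proof is correct. The paper itself does not prove Lemma~\ref{proj lem} --- it is quoted from \cite{Li_Wu(na2016)} --- but your argument (expanding $\mathcal{P}_{h}\tilde{w}$ in the orthonormal basis $\{\tilde{\psi}_{i}\}$, testing the defining Galerkin relation against these basis functions to identify the coefficients $C_{j}=\langle\tilde{\psi}_{j},\tilde{w}\rangle$, and reassembling the sum into the kernel $P_{\tau,\sigma}$) is precisely the standard derivation used in that reference, and your closing discussion correctly handles the one genuine subtlety, namely that the entrywise inner product decouples the vector-valued problem into $d$ scalar $L^{2}([0,1])$ projections.
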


Based on these preliminaries, we are in a position to present the
scheme of functionally fitted energy-diminishing methods.

\begin{defi}\label{def EDCr}
Choose a stepsize $h$  and consider a function $\tilde{u}\in X_{h}$
with $\tilde{u}(0)=y_{0}$, satisfying
\begin{equation} \label{EDCr}
\begin{aligned} &
G(\tilde{u}(\tau))\tilde{u}^{\prime}(\tau) = -\mathcal{P}_{h}\big(
\nabla U(\tilde{u}(\tau))\big).
\end{aligned}
\end{equation}
The numerical approximation  after one step is  defined by $
y_{1}=\tilde{u}(1).$ We call this method     as functionally-fitted
energy-diminishing  method and denote it by FFED.
\end{defi}

\begin{theo}\label{ED}
The  FFED method \eqref{EDCr} reduces the energy of gradient systems
\eqref{IVPPP-G}, i.e., $U(y_{1})\leq U(y_{0}).$ This means that our
FFED method \eqref{EDCr} is unconditionally energy-diminishing.
\end{theo}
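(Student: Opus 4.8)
The plan is to track the energy $U$ directly along the one-step interpolant $\tilde{u}(\tau)$, $\tau\in[0,1]$, rather than only comparing the two endpoints. Since $\tilde{u}$ is continuously differentiable with $\tilde{u}(0)=y_0$ and $y_1=\tilde{u}(1)$, the fundamental theorem of calculus combined with the chain rule gives
\begin{equation*}
U(y_1)-U(y_0)=\int_0^1\frac{d}{d\tau}U\big(\tilde{u}(\tau)\big)\,d\tau=\int_0^1\nabla U\big(\tilde{u}(\tau)\big)^{\intercal}\tilde{u}'(\tau)\,d\tau .
\end{equation*}
The whole argument then reduces to showing that this integral is nonpositive.

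The key structural observation is that $\tilde{u}'(\tau)\in Y_{h}$. Indeed, $\tilde{u}\in X_{h}=\mathrm{span}\{1,\int_0^\tau\tilde{\varphi}_0(s)ds,\ldots,\int_0^\tau\tilde{\varphi}_{r-1}(s)ds\}$, so differentiating annihilates the constant and returns an element of $\mathrm{span}\{\tilde{\varphi}_0,\ldots,\tilde{\varphi}_{r-1}\}=Y_{h}$ (understood componentwise, as the coefficients are vectors in $\mathbb{R}^d$). This membership is exactly what lets me insert the projection at no cost: because $\mathcal{P}_{h}$ is defined in Definition \ref{def projection} by $\langle\tilde{v},\mathcal{P}_{h}\tilde{w}\rangle=\langle\tilde{v},\tilde{w}\rangle$ for every $\tilde{v}\in Y_{h}$, choosing $\tilde{v}=\tilde{u}'$ and summing the resulting identity over the $d$ components yields
\begin{equation*}
\int_0^1 \tilde{u}'(\tau)^{\intercal}\nabla U\big(\tilde{u}(\tau)\big)\,d\tau=\int_0^1 \tilde{u}'(\tau)^{\intercal}\mathcal{P}_{h}\big(\nabla U(\tilde{u}(\tau))\big)\,d\tau .
\end{equation*}

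With the projection in place I invoke the defining relation \eqref{EDCr}, namely $\mathcal{P}_{h}(\nabla U(\tilde{u}(\tau)))=-G(\tilde{u}(\tau))\tilde{u}'(\tau)$, to obtain
\begin{equation*}
U(y_1)-U(y_0)=-\int_0^1 \tilde{u}'(\tau)^{\intercal}G\big(\tilde{u}(\tau)\big)\tilde{u}'(\tau)\,d\tau .
\end{equation*}
Since $G(y)$ is symmetric and satisfies $v^{\intercal}G(y)v\geq v^{\intercal}\Gamma v>0$ for all $v\neq 0$, the integrand is pointwise nonnegative, so the right-hand side is $\leq 0$ and $U(y_1)\leq U(y_0)$ follows; the decrease is strict unless $\tilde{u}'\equiv 0$, i.e. at a stationary state. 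I expect the only delicate point to be the bookkeeping in the projection-insertion step: one must keep in mind that the inner product of Definition \ref{def projection} acts \emph{entrywise} on vector-valued functions, so that the scalar identity above is recovered by applying the componentwise orthogonality relation and only afterwards forming the Euclidean dot product with $\tilde{u}'$. Everything else is a direct computation, and crucially no step-size restriction enters anywhere, which is precisely why the scheme is unconditionally energy-diminishing.
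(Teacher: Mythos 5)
Your proof is correct and follows essentially the same route as the paper's: write $U(y_1)-U(y_0)$ via the chain rule, use $\tilde{u}'\in Y_h$ and the componentwise defining property of $\mathcal{P}_h$ to swap $\nabla U(\tilde{u})$ for its projection, insert the scheme \eqref{EDCr}, and conclude from the positive definiteness of $G$. The only (immaterial) difference is a factor of $h$: the paper's convention is that $\tilde{u}'(\tau)$ denotes the derivative with respect to $t=\tau h$, so its identity reads $U(y_1)-U(y_0)=-h\int_0^1\tilde{u}'(\tau)^{\intercal}G(\tilde{u}(\tau))\tilde{u}'(\tau)\,d\tau$, which of course has the same sign.
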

\begin{proof}According to the definitions of $X_h$ and $Y_h$, we know that if
$\tilde{u}(\tau)\in X_h$, then one has $\tilde{u}'(\tau)\in Y_h$.
From the definition of $\mathcal{P}_{h}$, it follows that
\begin{equation*}
\int_{0}^{1}\tilde{u}'(\tau)_{i}\mathcal{P}_{h}\big( \nabla
U(\tilde{u}(\tau))\big)_{i}d\tau=\int_{0}^{1}\tilde{u}'(\tau)_{i}
\big( \nabla U(\tilde{u}(\tau))\big)_{i}d\tau,\quad i=1,2,\ldots,d,
\end{equation*}
where   $(\cdot)_{i}$ denotes the $i$th entry of a vector. Then, we
arrive at
\begin{equation*}
\int_{0}^{1}\tilde{u}'(\tau)^{\intercal}\mathcal{P}_{h}\big( \nabla
U(\tilde{u}(\tau))\big)
d\tau=\int_{0}^{1}\tilde{u}'(\tau)^{\intercal}   \nabla
U(\tilde{u}(\tau)) d\tau.
\end{equation*}
Therefore, it is obtained that
\begin{equation*}
\begin{aligned}
&U(y_{1})-U(y_{0})=\int_{0}^{1}\frac{d}{d\tau}U(\tilde{u}(\tau))d\tau
 =h\int_{0}^{1}\tilde{u}'(\tau)^{\intercal}\nabla
U(\tilde{u}(\tau))d\tau=h\int_{0}^{1}\tilde{u}'(\tau)^{\intercal}\mathcal{P}_{h}\big(
\nabla U(\tilde{u}(\tau))\big)d\tau.
\end{aligned}
\end{equation*}
Inserting the scheme \eqref{EDCr} into this formula yields
\begin{equation*}
\begin{aligned}
U(y_{1})-U(y_{0})&=-h\int_{0}^{1}\tilde{u}'(\tau)^{\intercal}G(\tilde{u}(\tau))\tilde{u}^{\prime}(\tau)d\tau\leq0.
\end{aligned}
\end{equation*}
 \end{proof}

\section{Unconditionally   damping
property}\label{sec-Damping} In this section, we consider the
 potential $U$ of  the form
\begin{equation}\label{spe-U}
  U(y)=\dfrac{1}{2}y^{\intercal}Ay+V(y),
\end{equation}
  where the function $V$  is  twice continuously differentiable  and the matrix $A$  is   symmetric positive semi-definite
and of arbitrarily large norm. In this case, \eqref{IVPPP-G} is a
stiff gradient system.   This kind of stiff systems arise from the
spatial discretization of Cahn--Hilliard and Allen--Cahn partial
differential equations (see, e.g. \cite{Barrett02,Feng04}). Many
effective methods have been derived for this stiff gradient system
with a constant matrix $G$ and we refer to
\cite{hairer2006,Hairer93-stiff,Li_Wu(sci2016),Liu_Wu(AML),wang-2016,17-new,wu2017-JCAM,wang2017-JCM,wu2013-book}
for example.

 The FFED method \eqref{EDCr} for solving this stiff gradient system is defined  as
 follows.
\begin{defi}\label{def EDCr-E}
We consider a function $\tilde{u}\in X_{h}$ with
$\tilde{u}(0)=y_{0}$, satisfying
\begin{equation} \label{EDCr-E}
\begin{aligned} &G(\tilde{u}(\tau))
\tilde{u}^{\prime}(\tau)+A\tilde{u}(\tau)=-\mathcal{P}_{h}\big(
\nabla V(\tilde{u}(\tau))\big),\qquad y_{1}=\tilde{u}(1).
\end{aligned}
\end{equation}
This method is denoted  by EFFED.
\end{defi}

\begin{theo}\label{ED-E}
The EFFED  method \eqref{EDCr-E} reduces the energy of the stiff
gradient system \eqref{IVPPP-G} with \eqref{spe-U}, i.e.,
$U(y_{1})\leq U(y_{0}).$
\end{theo}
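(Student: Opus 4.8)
The plan is to mimic the proof of Theorem~\ref{ED}, which handled the general case, and adapt it to the splitting $U(y)=\tfrac12 y^{\intercal}Ay+V(y)$. The key observation is that the energy difference splits into a quadratic part coming from $A$ and a nonlinear part coming from $V$, and the projection identity from Definition~\ref{def projection} can be applied to the $V$-part exactly as before.

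First I would write $U(y_1)-U(y_0)=h\int_0^1 \tilde{u}'(\tau)^{\intercal}\nabla U(\tilde{u}(\tau))\,d\tau$ using the fundamental theorem of calculus and the chain rule, exactly as in the proof of Theorem~\ref{ED}. Then, since $\nabla U(y)=Ay+\nabla V(y)$, I would separate the integrand into $\tilde{u}'(\tau)^{\intercal}A\tilde{u}(\tau)$ and $\tilde{u}'(\tau)^{\intercal}\nabla V(\tilde{u}(\tau))$. The crucial point is the handling of the second term: because $\tilde{u}\in X_h$ implies $\tilde{u}'\in Y_h$, the projection property gives $\int_0^1 \tilde{u}'(\tau)^{\intercal}\nabla V(\tilde{u}(\tau))\,d\tau=\int_0^1 \tilde{u}'(\tau)^{\intercal}\mathcal{P}_h\big(\nabla V(\tilde{u}(\tau))\big)\,d\tau$, following the entrywise argument used in Theorem~\ref{ED}. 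The quadratic term needs no projection, since $A\tilde{u}$ appears directly in the scheme \eqref{EDCr-E}.

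Next I would insert the scheme \eqref{EDCr-E}, which gives $\mathcal{P}_h\big(\nabla V(\tilde{u})\big)=-G(\tilde{u})\tilde{u}'-A\tilde{u}$. Substituting this into the combined integrand, the two $A\tilde{u}$ contributions cancel: the explicit quadratic term $\tilde{u}'^{\intercal}A\tilde{u}$ combines with the $-\tilde{u}'^{\intercal}A\tilde{u}$ coming from the scheme, leaving only $U(y_1)-U(y_0)=-h\int_0^1 \tilde{u}'(\tau)^{\intercal}G(\tilde{u}(\tau))\tilde{u}'(\tau)\,d\tau$. By the positive-definiteness assumption on $G$, this integrand is nonnegative, so the whole expression is $\leq 0$.

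The only step requiring genuine care is verifying that the projection identity applies to the $V$-part while the quadratic part is treated directly; this is essentially the same entrywise argument as in Theorem~\ref{ED}, so I do not anticipate a serious obstacle. The mild subtlety is recognising that $A$ must \emph{not} be projected (it is already a polynomial-type linear term absorbed into the scheme), whereas in the original proof the entire gradient $\nabla U$ was projected. Once the algebraic cancellation of the $A$-terms is seen clearly, the monotonicity follows immediately from $v^{\intercal}G(y)v>0$.
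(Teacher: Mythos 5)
Your proposal is correct and follows essentially the same route as the paper's own proof: both rest on the fundamental theorem of calculus, the projection identity applied with the test function $\tilde{u}'\in Y_h$ to the $\nabla V$ term, substitution of the scheme \eqref{EDCr-E}, and the positive definiteness of $G$. The only difference is cosmetic—the paper first establishes the identity $\int_{0}^{1}\tilde{w}^{\intercal}G(\tilde{u})\tilde{u}'\,d\tau=-\int_{0}^{1}\tilde{w}^{\intercal}\nabla U(\tilde{u})\,d\tau$ for arbitrary $\tilde{w}\in Y_h$ and then sets $\tilde{w}=\tilde{u}'$, whereas you substitute directly and let the two $A\tilde{u}$ contributions cancel, which is the same algebra in a different order.
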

\begin{proof}
This proof is similar to that of Theorem \ref{ED}. For any
$\tilde{w} (\tau)\in Y_{h}$, it is easy to prove that
\begin{equation}\label{inner}
\begin{aligned}
&\int_{0}^{1}\tilde{w}(\tau)^{\intercal}G(\tilde{u}(\tau))\tilde{u}^{\prime}(\tau)d\tau=-\int_{0}^{1}\tilde{w}(\tau)^{\intercal}A\tilde{u}(\tau)d\tau
-\int_{0}^{1}\tilde{w}(\tau)^{\intercal} \mathcal{P}_{h}\big( \nabla
V(\tilde{u}(\tau))\big) d\tau\\
&=-\int_{0}^{1}\tilde{w}(\tau)^{\intercal}A\tilde{u}(\tau)d\tau
-\int_{0}^{1}\tilde{w}(\tau)^{\intercal}  \nabla
V(\tilde{u}(\tau)) d\tau= -\int_{0}^{1}\tilde{w}(\tau)^{\intercal} \nabla U(\tilde{u}(\tau)) d\tau.\\
\end{aligned}
\end{equation}
Considering $\tilde{u}^{\prime}(\tau)\in Y_{h}$ and letting
$\tilde{w}(\tau)= \tilde{u}^{\prime}(\tau)$ in \eqref{inner}, we
obtain
\begin{equation*}\label{inner-1}
\begin{aligned}
&\int_{0}^{1}\tilde{u}^{\prime}(\tau)^{\intercal}G(\tilde{u}(\tau))\tilde{u}^{\prime}(\tau)d\tau=- \int_{0}^{1}\tilde{u}^{\prime}(\tau)^{\intercal} \nabla U(\tilde{u}(\tau)) d\tau.\\
\end{aligned}
\end{equation*}
Thus, one arrives
\begin{equation*}
\begin{aligned}
U(y_{1})-U(y_{0})&=\int_{0}^{1}\frac{d}{d\tau}U(\tilde{u}(\tau))d\tau
=h\int_{0}^{1}\tilde{u}'(\tau)^{\intercal}\nabla
U(\tilde{u}(\tau))d\tau
=-h\int_{0}^{1}\tilde{u}^{\prime}(\tau)^{\intercal}G(\tilde{u}(\tau))\tilde{u}^{\prime}(\tau)d\tau\leq0.
\end{aligned}
\end{equation*}
 \end{proof}

For $G(y)=I$ and a quadratic potential (i.e., $V(y)=0$ in
\eqref{spe-U}),  our   EFFED method \eqref{EDCr-E} becomes $$
\tilde{u}^{\prime}(\tau)+A\tilde{u}(\tau)=0,$$ which leads to $$
y_{1}=\tilde{u}(1)=e^{ -h  A}y_{0}.$$ This scheme has been presented
and researched in \cite{17-new}. Rewrite this scheme as $
y_{1}=R(-hA)y_0$ with  the stability function $R(-hA)=e^{ -h  A}.$
It is noted that the damping property
 $ |R(\infty)| <1$ plays an important role in   the  properties of
Runge-Kutta methods when solving  semilinear parabolic equations,
which has been researched in \cite{Lubich93,Lubich96}. The role of
the condition  $ |R(\infty)| =0$ has been well understood in Chapter
VI of \cite{Hairer93-stiff}. It has been shown in \cite{hairer2013}
that all the unconditionally energy-diminishing methods including
discrete-gradient methods, AVF methods and AVF collocation methods
show no damping for very stiff gradient systems since they do not
satisfy damping property unconditionally.
  However,  it is noted  that  for our EFFED method \eqref{EDCr-E}, one  has $$ |R(\infty)| =|e^{
-\infty}|=0.$$ This means that our  methods have unconditionally
damping property, which  is a significant feature especially  for
very stiff gradient systems.

\section{Algebraic order}\label{sec-order}
In this section, we analyse the algebraic order of the new methods.

\begin{theo}\label{order}
The FFED method \eqref{EDCr} and EFFED method \eqref{EDCr-E} are
both of order $2r$, which means that
\begin{equation*} \begin{aligned}
&\tilde{u}(1)-y(t_{0}+h)=\mathcal{O}(h^{2r+1}).\end{aligned}
\end{equation*}
\end{theo}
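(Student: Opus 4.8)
The plan is to prove that both methods attain order $2r$ by identifying the FFED and EFFED schemes with a collocation-type / continuous-stage method based on the fitting space, and then analysing the local error via the standard theory of functionally fitted methods. The key observation is that the defining relation \eqref{EDCr} forces $\tilde{u}'(\tau)$ to lie in $Y_h$, and that the projection $\mathcal{P}_h$ makes the scheme exactly reproduce the variational characterisation that underlies collocation. Concretely, I would first rewrite the method in weak form: for every test function $\tilde{v}\in Y_h$,
\begin{equation*}
\int_{0}^{1}\tilde{v}(\tau)^{\intercal}\big(G(\tilde{u}(\tau))\tilde{u}'(\tau)+\nabla U(\tilde{u}(\tau))\big)\,d\tau=0,
\end{equation*}
which holds because $\mathcal{P}_h$ is the orthogonal projection onto $Y_h$ and $\tilde{u}'\in Y_h$. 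This recasts FFED as a Galerkin/continuous-stage method whose internal stages are the nodes of a quadrature rule exact on the product space, so the order analysis reduces to counting how many moment conditions the fitting functions $\{\varphi_i\}_{i=0}^{r-1}$ enforce.

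Second, I would invoke the order theory already developed for functionally fitted methods in the cited reference \cite{Li_Wu(na2016)}: a functionally fitted method built from an $r$-dimensional space $Y_h$ whose associated quadrature is interpolatory at $r$ nodes behaves, to leading order, like the Gauss collocation method of $r$ stages, and therefore has order $2r$ provided the fitting functions reduce to the monomials $1,t,\dots,t^{r-1}$ in the classical limit. The route is to expand the exact solution $y(t_0+\cdot h)$ and the numerical solution $\tilde{u}$ in the basis, subtract, and show that the defect is orthogonal (under $\langle\cdot,\cdot\rangle$) to a space of dimension $r$, killing the first $2r$ error terms. For EFFED the linear term $A\tilde{u}(\tau)$ is handled identically because it is already exactly representable and the projection acts only on $\nabla V$; the algebra in \eqref{inner} shows the two schemes share the same defect structure, so they share the same order.

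Third, I would make the order count precise by a Taylor/B-series-style comparison. Writing $e(\tau)=\tilde{u}(\tau)-y(t_0+\tau h)$, one differentiates the scheme, uses $e(0)=0$, and shows that the residual obtained by inserting the exact solution into \eqref{EDCr} is $\mathcal{O}(h^{2r})$ in the appropriate norm; here the superconvergence factor of $2$ comes from the Gauss-type symmetry of the interpolatory quadrature associated with $Y_h$, exactly as for Gauss collocation. Finally a Gronwall/fixed-point estimate on the variational equation, using the uniform positive-definiteness $v^{\intercal}G(y)v\ge v^{\intercal}\Gamma v>0$ to invert $G$ and bound $\tilde{u}'$, propagates this local bound to $\tilde{u}(1)-y(t_0+h)=\mathcal{O}(h^{2r+1})$.

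The main obstacle I anticipate is justifying the $2r$ (rather than merely $r$) order, i.e. establishing the superconvergence. This requires showing that the quadrature implicitly defined by projecting onto $Y_h$ integrates products of the form (test function)$\times$(solution derivative) exactly up to degree $2r-1$, which is the functionally fitted analogue of the Gauss condition; the delicate point is that $\{\varphi_i\}$ are general smooth functions rather than polynomials, so I would lean on the result of \cite{Li_Wu(na2016)} that the orthonormal basis $\{\tilde{\psi}_i\}$ and its associated nodes inherit the Gauss orthogonality, and verify that the nonlinearity $\nabla U$ and the variable metric $G(\tilde{u})$ do not degrade this by more than the expected $\mathcal{O}(h)$ per solution-dependence, which the chain rule keeps under control.
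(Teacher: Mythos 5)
Your proposal assembles the right raw ingredients (the weak/Galerkin reformulation, the projection-error estimates from \cite{Li_Wu(na2016)}, and the idea that orthogonality to $Y_h$ is the source of superconvergence), but the step that is supposed to deliver order $2r$ would fail as written. You claim that the residual obtained by inserting the exact solution into \eqref{EDCr} is $\mathcal{O}(h^{2r})$ ``in the appropriate norm'' and that a Gronwall estimate then propagates this to the endpoint. Neither half works. The defect of the exact solution is $-(I-\mathcal{P}_h)\big(\nabla U(y(t_0+\tau h))\big)$, which is only $\mathcal{O}(h^{r})$ (this is precisely the content of Lemma 3.4 of \cite{Li_Wu(na2016)}); no ``Gauss-type symmetry of an interpolatory quadrature'' improves this, because the method \eqref{EDCr} involves no quadrature nodes at all --- only the $L^2$ projection $\mathcal{P}_h$ (nodes enter only in the practical implementations of Section \ref{sec-implementations}, not in the theorem being proved). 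And a Gronwall estimate cannot convert an $\mathcal{O}(h^{r})$ defect that happens to be orthogonal to $Y_h$ into an $\mathcal{O}(h^{2r+1})$ endpoint error: Gronwall sees only the size of the defect, not its orthogonality, so that route caps out at roughly order $r+1$. Indeed the error of $\tilde{u}$ in the interior of the step genuinely is of the lower order; superconvergence occurs only at $\tau=1$. This mirrors classical collocation theory, where the stage-order estimate (via Gronwall) and the endpoint superconvergence (via a flow-map error representation) are two distinct results with two distinct proofs.

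The missing idea --- and the heart of the paper's proof --- is the nonlinear variation-of-constants (Alekseev--Gr\"obner) representation of the endpoint error. Writing $y(\cdot,\tilde t,\tilde y)$ for the flow and $\Phi=\partial y/\partial\tilde y$, one differentiates $\alpha\mapsto y(t_{0}+h,\,t_{0}+\alpha h,\,\tilde u(\alpha))$ and integrates over $\alpha\in[0,1]$ to obtain
\begin{equation*}
\tilde u(1)-y(t_{0}+h)=-h\int_{0}^{1}\Phi^{1}(\alpha)\,G^{-1}(\tilde u(\alpha))\,\big(\nabla U(\tilde u(\alpha))-\mathcal{P}_{h}(\nabla U\circ\tilde u)(\alpha)\big)\,d\alpha,
\end{equation*}
so that the $\mathcal{O}(h^{r})$ defect appears under an integral against the smooth kernel $K(\alpha)=\Phi^{1}(\alpha)G^{-1}(\tilde u(\alpha))$. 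Only now can the orthogonality be exploited: replacing each row $K_{i}$ by $\mathcal{P}_{h}K_{i}$ costs $\mathcal{O}(h^{r})$, and the projected part integrates against the defect to exactly zero by the definition of $\mathcal{P}_{h}$. What survives is $h\cdot\mathcal{O}(h^{r})\cdot\mathcal{O}(h^{r})=\mathcal{O}(h^{2r+1})$. This ``product of two projection errors under one integral'' mechanism is what your outline gestures at (``killing the first $2r$ error terms'') but never realizes; without the flow representation there is no integral in which the orthogonality can act, and your expansion of the exact solution ``in the basis'' is not even available, since $y(t_0+\cdot\,h)\notin X_h$.
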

\begin{proof} Denote by $y(\cdot,\tilde{t},
\tilde{y})$ the solution of $y'=-G^{-1}(y) \nabla U(y)$ satisfying
the initial condition $y(\tilde{t},\tilde{t}, \tilde{y})=\tilde{y}$
for any given $\tilde{t}\in[0,h]$. Let $ \Phi(s,\tilde{t},
\tilde{y})=\frac{\partial y(s,\tilde{t}, \tilde{y})}{\partial
\tilde{y}}. $ Recalling the elementary theory of ordinary
differential equations, the following standard result is obtained
\begin{equation*}
\frac{\partial y(s,\tilde{t}, \tilde{y})}{\partial
\tilde{t}}=-\Phi(s,\tilde{t}, \tilde{y})\big(-G^{-1}(\tilde{y})
\nabla U(\tilde{y})\big).
\end{equation*}
On the base of this result, we get
\begin{equation*}
\begin{aligned}
&\tilde{u}(1)-y(t_{0}+  h)=y(t_{0}+  h,t_{0}+  h,\tilde{u}(1))-y(t_{0}+  h,t_{0},y_{0})\\
&=\int_{0}^{1}\frac{d}{d\alpha}y(t_{0}+  h,t_{0}+\alpha h,\tilde{u}(\alpha))d\alpha\\
&=\int_{0}^{1}(h\frac{\partial y}{\partial\tilde{t}}(t_{0}+
h,t_{0}+\alpha h,\tilde{u}(\alpha))+
\frac{\partial y}{\partial\tilde{y}}(t_{0}+  h,t_{0}+\alpha h,\tilde{u}(\alpha))h\tilde{u}^{\prime}(\alpha))d\alpha\\
&=\int_{0}^{1}\Big(-h\frac{\partial y}{\partial\tilde{y}}(t_{0}+
h,t_{0}+\alpha
h,\tilde{u}(\alpha))G^{-1}(\tilde{u}(\alpha))\big(A\tilde{u}(\alpha)- \nabla U(\tilde{u}(\alpha))\big)\\
&\quad+ \frac{\partial y}{\partial\tilde{y}}(t_{0}+ h,t_{0}+\alpha
h,\tilde{u}(\alpha))G^{-1}(\tilde{u}(\alpha))\big(hA
\tilde{u}(\alpha)-h\langle
P_{1,\sigma}, \nabla U(\tilde{u}(\alpha))\rangle_{\alpha}\big)\Big)d\alpha\\
&=-h\int_{0}^{1}\Phi^{1}(\alpha)G^{-1}(\tilde{u}(\alpha))\big(g(\tilde{u}(\alpha))-\mathcal{P}_{h}(g\circ
\tilde{u})(\alpha)\big)d\alpha,
\end{aligned}
\end{equation*}
where $\Phi^{1}(\alpha)=\frac{\partial y}{\partial\tilde{y}}(t_{0}+
h,t_{0}+\alpha h,\tilde{u}(\alpha)).$
 We denote the matrix $\Phi^{1}(\alpha)G^{-1}(\tilde{u}(\alpha))$  by $K(\alpha)$ and
 partition it as
$K(\alpha)=(K_{1}(\alpha),\ldots,K_{d}(\alpha))^{\intercal}$. It
follows from Lemma 3.4 of  \cite{Li_Wu(na2016)} that
\begin{equation*}
K_{i}(\alpha)=\mathcal{P}_{h}K_{i}(\alpha)+\mathcal{O}(h^{r}),\quad
i=1,2,\ldots,d,
\end{equation*}
and
$$g(\tilde{u}(\alpha))-\mathcal{P}_{h}(g\circ
\tilde{u})(\alpha)=\mathcal{O}(h^{r}).$$
 On the other hand, in light of the definition of $\mathcal{P}_{h}$, we
 obtain
\begin{equation*}
\begin{aligned}
&\int_{0}^{1}(\mathcal{P}_{h}K_{i}(\alpha))^{\intercal}g(\tilde{u}(\alpha))d\alpha
=\int_{0}^{1}(\mathcal{P}_{h}K_{i}(\alpha))^{\intercal}\mathcal{P}_{h}(g\circ
\tilde{u})(\alpha)d\alpha,\quad i=1,2,\ldots,d.
\end{aligned}\end{equation*}
 Therefore, one has
\begin{equation*}
\begin{aligned}
&\tilde{u}(1)-y(t_{0}+h)
= -h\int_{0}^{1}\left(\left(\begin{array}{c}(\mathcal{P}_{h}K_{1}(\alpha))^{\intercal}\\
\vdots\\(\mathcal{P}_{h}K_{d}(\alpha))^{\intercal}\end{array}\right)+\mathcal{O}(h^{r})\right)\big(g(\tilde{u}(\alpha))-\mathcal{P}_{h}(g\circ \tilde{u})(\alpha)\big)d\alpha\\
=&-h\int_{0}^{1}\left(\begin{array}{c}(\mathcal{P}_{h}K_{1}(\alpha))^{\intercal}\big(g(\tilde{u}(\alpha))-\mathcal{P}_{h}(g\circ \tilde{u})(\alpha)\big)\\
\vdots\\(\mathcal{P}_{h}K_{d}(\alpha))^{\intercal}\big(g(\tilde{u}(\alpha))-\mathcal{P}_{h}(g\circ
\tilde{u})(\alpha)\big)\end{array}\right)d\alpha
-h\int_{0}^{1}\mathcal{O}(h^{r})\times\mathcal{O}(h^{r})d\alpha\\
=&0+\mathcal{O}(h^{2r+1})=\mathcal{O}(h^{2r+1}).\\
\end{aligned}
\end{equation*}
\end{proof} 
\begin{rem}
It follows from this theorem that our new methods  can be of
arbitrarily high order   only by   choosing a large integer $r$,
which is  very convenient  and simple.
\end{rem}

\section{Implementations}\label{sec-implementations}
This section considers the implementation issue  of the new methods.
\subsection{For the case that $G(y)$ is a constant matrix}
We first  consider the case that $G(y)$ is a constant matrix $M$.
Under this condition, the FFED method \eqref{EDCr} becomes
\begin{equation*}%\label{EDCr-spe}
\begin{aligned} &
\tilde{u}^{\prime}(\tau)=-M^{-1}\int_{0}^1 P_{\tau,\sigma}\nabla
U(\tilde{u}(\sigma))d\sigma,
\end{aligned}
\end{equation*}
which can be solved by the variation-of-constants formula  as
follows
\begin{equation*}%\label{uu}
 \begin{aligned} &\tilde{u}(\tau)
= y_{0}-\tau h  M^{-1}  \int_{0}^1  \int_{0}^1 P_{\xi
\tau,\sigma}\nabla
U(\tilde{u}(\sigma))d\sigma d\xi\\
&= y_{0}-\tau h M^{-1} \sum_{i=0}^{r-1} \Big(
 \int_{0}^1  \tilde{\psi}_{i}(\xi \tau)d\xi\Big) \int_{0}^1 \tilde{\psi}_{i}(\sigma)\nabla
U(\tilde{u}(\sigma)) d\sigma.
\end{aligned}
\end{equation*}

 Following
\cite{Li_Wu(na2016)}, we introduce the generalized Lagrange
interpolation functions $l_{i}(\tau)\in X_{h}$ with respect to
$(r+1)$ distinct points $\{c_{i}\}_{i=1}^{r+1}\subseteq[0,1]$:
\begin{equation*}%\label{Lagrange}
(l_{1}(\tau),\ldots,l_{r+1}(\tau))=(\widetilde{\Phi}_{1}(\tau),\widetilde{\Phi}_{2}(\tau),\ldots,\widetilde{\Phi}_{r+1}(\tau))\Lambda^{-1},
\end{equation*}
where {$\{\Phi_{i}(t)\}_{i=1}^{r+1}$ is a basis of $X$,
$\widetilde{\Phi}_{i}(\tau)=\Phi_{i}(\tau h)$} and
\begin{equation*}
\Lambda=\left(\begin{array}{cccc}\widetilde{\Phi}_{1}(c_{1})&\widetilde{\Phi}_{2}(c_{1})&\ldots&\widetilde{\Phi}_{r+1}(c_{1})\\
\widetilde{\Phi}_{1}(c_{2})&\widetilde{\Phi}_{2}(c_{2})&\ldots&\widetilde{\Phi}_{r+1}(c_{2})\\ \vdots&\vdots& &\vdots\\
\widetilde{\Phi}_{1}(c_{r+1})&\widetilde{\Phi}_{2}(c_{r+1})&\ldots&\widetilde{\Phi}_{r+1}(c_{r+1})\\\end{array}\right).
\end{equation*}
   Then it follows from
\cite{Li_Wu(na2016)} that $\{l_{i}(\tau)\}_{i=1}^{r+1}$ is a basis
of $X_h$
 satisfying $l_{i}(c_{j})=\delta_{ij}.$ Since $\tilde{u}(\tau)\in
X_{h}$, $\tilde{u}(\tau)$ can be expressed by the basis of $X_{h}$
as
\begin{equation*}
\tilde{u}(\tau)=\sum_{i=1}^{r+1}\tilde{u}(c_{i})l_{i}(\tau).
\end{equation*}
Choosing $0= c_{1}<c_2<\cdots<c_{r+1}= 1$ and denoting
$y_{\sigma}=\tilde{u}(\sigma)$, we present the following practical
FFED  methods.

\begin{defi}\label{def PEDECr}
The practical  FFED  method  is defined  as follows:
\begin{equation}\label{PEDECr}
\left\{\begin{aligned} &y_{c_j}= y_{0}-c_jhM^{-1}  \sum_{i=0}^{r-1}
\Big(
 \int_{0}^1  \tilde{\psi}_{i}(\xi c_j)d\xi\Big) \int_{0}^1\tilde{\psi}_{i}(\sigma)\nabla
U(\sum_{m=1}^{r+1}y_{c_m}l_{m}(\sigma)) d\sigma,\quad j=2,\ldots,r,\\
&y_{1}= y_{0}- h M^{-1}  \sum_{i=0}^{r-1} \Big(
 \int_{0}^1  \tilde{\psi}_{i}(\xi  )d\xi\Big) \int_{0}^1\tilde{\psi}_{i}(\sigma)\nabla
U(\sum_{m=1}^{r+1}y_{c_m}l_{m}(\sigma)) d\sigma.
\end{aligned}\right.
\end{equation}

\end{defi}

In a similar way, for the stiff system with the special potential
$U$ of the form \eqref{spe-U}, we have the following practical EFFED
method for \eqref{EDCr-E}.
\begin{defi}\label{def PEDECr-E}
The practical  EFFED  method  is given  by
\begin{equation}\label{PEDECr-E}
\left\{\begin{aligned} &y_{c_j}=e^{-c_j h M^{-1}A}y_{0}-c_jhM^{-1}
\sum_{i=0}^{r-1} \int_{0}^1\Big(
 \int_{0}^1 e^{-(1-\xi)c_j h M^{-1} A} \tilde{\psi}_{i}(\xi c_j)d\xi\Big) \tilde{\psi}_{i}(\sigma)\nabla
V(\sum_{m=1}^{r+1}y_{c_m}l_{m}(\sigma)) d\sigma,\\
&\qquad \qquad\qquad\qquad\qquad\qquad\qquad\qquad \qquad \qquad\qquad\qquad\qquad\qquad\qquad\quad j=2,\ldots,r,\\
&y_{1}=e^{ -h M^{-1}A}y_{0}- hM^{-1} \sum_{i=0}^{r-1}
\int_{0}^1\Big(
 \int_{0}^1 e^{-(1-\xi)  h M^{-1} A} \tilde{\psi}_{i}(\xi  )d\xi\Big) \tilde{\psi}_{i}(\sigma)\nabla
V(\sum_{m=1}^{r+1}y_{c_m}l_{m}(\sigma)) d\sigma.
\end{aligned}\right.
\end{equation}
\end{defi}

It is noted that when $M=I$, this method has been proposed and
analysed  in \cite{17-new}.

\subsection{For the general case that  $G(y)$ depends on $y$}
Now we pay attention to the general case that $G(y)$ is a  matrix
depending on $y$. The FFED method \eqref{EDCr} becomes
\begin{equation*}
\begin{aligned} &
\tilde{u}^{\prime}(\tau)=-G^{-1}(\tilde{u}(\tau))\int_{0}^1
P_{\tau,\sigma}\nabla U(\tilde{u}(\sigma))d\sigma,
\end{aligned}
\end{equation*}
which implies
\begin{equation}
\begin{aligned}
&\tilde{u}(x)=y_0 -h\int_{0}^xG^{-1}(\tilde{u}(\tau))\int_{0}^1
P_{\tau,\sigma}\nabla
V(\tilde{u}(\sigma))d\sigma d\tau\\
=&y_0 -h\int_{0}^xG^{-1}(\tilde{u}(\tau))\int_{0}^1
\sum_{i=0}^{r-1}\tilde{\psi}_{i}(\tau)\tilde{\psi}_{i}(\sigma)\nabla
V(\tilde{u}(\sigma))d\sigma d\tau\\
=&y_0 -h\sum_{i=0}^{r-1}\int_{0}^xG^{-1}(\tilde{u}(\tau))
\tilde{\psi}_{i}(\tau)d\tau \int_{0}^1\tilde{\psi}_{i}(\sigma)\nabla
V(\tilde{u}(\sigma))d\sigma. \\
\end{aligned}
\end{equation}
Choosing $x=c_j$ for $j=2,\ldots,r+1$   and denoting
$y_{\sigma}=\tilde{u}(\sigma)$, we obtain the following practical
FFED method
\begin{equation}
\left\{\begin{aligned}
&\tilde{u}(\sigma)=\sum_{i=1}^{r+1}y_{c_{i}}l_{i}(\sigma),\\
 &y_{c_{j}}
= y_0 -h\sum_{i=0}^{r-1}\int_{0}^{c_{j}}G^{-1}(\tilde{u}(\tau))
\tilde{\psi}_{i}(\tau)d\tau \int_{0}^1\tilde{\psi}_{i}(\sigma)\nabla
V(\tilde{u}(\sigma))d\sigma, \ \ \  j=2,\ldots,r,\\
 &y_{1}
= y_0 -h\sum_{i=0}^{r-1}\int_{0}^{1}G^{-1}(\tilde{u}(\tau))
\tilde{\psi}_{i}(\tau)d\tau \int_{0}^1\tilde{\psi}_{i}(\sigma)\nabla
V(\tilde{u}(\sigma))d\sigma,
\end{aligned}\right.
\end{equation}
which represents a nonlinear system of equations for the unknowns $
y_{c_{j}}$ for $j=2,\ldots,r+1$ and it can be solved by iteration.

\begin{rem}
We note that the integrals appearing in the methods can be
calculated exactly for many cases.  If they cannot be directly
calculated, it is nature  to consider approximating them by a
quadrature rule.
\end{rem}

\section{Numerical test}\label{sec-nume}
As an example, we choose $$\varphi_k(t)=t^k,\ \ \ k=0,1,\cdots,r-1$$
for the function spaces $X$ and $Y$,  and then take $r=2$ and
$c_1=\frac{3-\sqrt{3}}{6},\ c_2=\frac{3+\sqrt{3}}{6}$ for our new
methods. We denote this method as FFED. In order to  show its
efficiency and robustness, we choose the following three methods in
the literature:
\begin{itemize}
\item  AVF:  the averaged vector field method studied
in \cite{McLachlan99};

\item  AVFC: the averaged vector field collocation method with $s=2$ given in
\cite{hairer2013};

\item  EEI:  the explicit exponential integrator of order four derived in
\cite{Hochbruck2009}.

\end{itemize}
 It is noted  that  we approximate
the integrals appearing in the methods by the four-point
Gauss-Legendre's quadrature. For implicit methods, we
  set $10^{-16}$ as the error tolerance and $10$ as the maximum number of each fixed-point iteration.

Consider the   gradient system  \eqref{IVPPP-G} with $$G(y)=\left(
    \begin{array}{cc}
      \cos(\theta) & \sin(\theta) \\
      -\sin(\theta) & \cos(\theta) \\
    \end{array}
  \right)^{-1}$$ and
$$U(y)=\frac{1}{2}r(y_1^2+y_2^2)-\frac{1}{2}\sin(\theta)\big(y_1y_2^2-\frac{1}{3}y_1^3\big)+\frac{1}{2}\cos(\theta)\big(-y_1^2y_2+\frac{1}{3}y_2^3\big),$$
where  $\theta=\pi/2-10^{-4}$ and $r=20.$ The  initial value is
chosen as $y_1(0)=0,\ y_2(0)=1.$ This system has been researched in
\cite{Mclachlan-98}.   We first solve it in $[0,100]$ with
$h=\frac{1}{10},\frac{1}{20},\frac{1}{100}$ and see Figure
\ref{p1-1}   for the results of the potential function $U(y)$.  Then
the system is solved   with $h=0.1/2^i$ for  $ i=1,\ldots,4$ and
$T=100,500,1000$. The global errors  are presented in Figure
\ref{p1-2}.

From these numerical results, it can be observed  that our FFED
method has a higher accuracy, a better energy-diminishing property,
and a more prominent damping behavior in comparison with the other
three methods.

\begin{figure}[ptb]
\centering
\includegraphics[width=4.5cm,height=7cm]{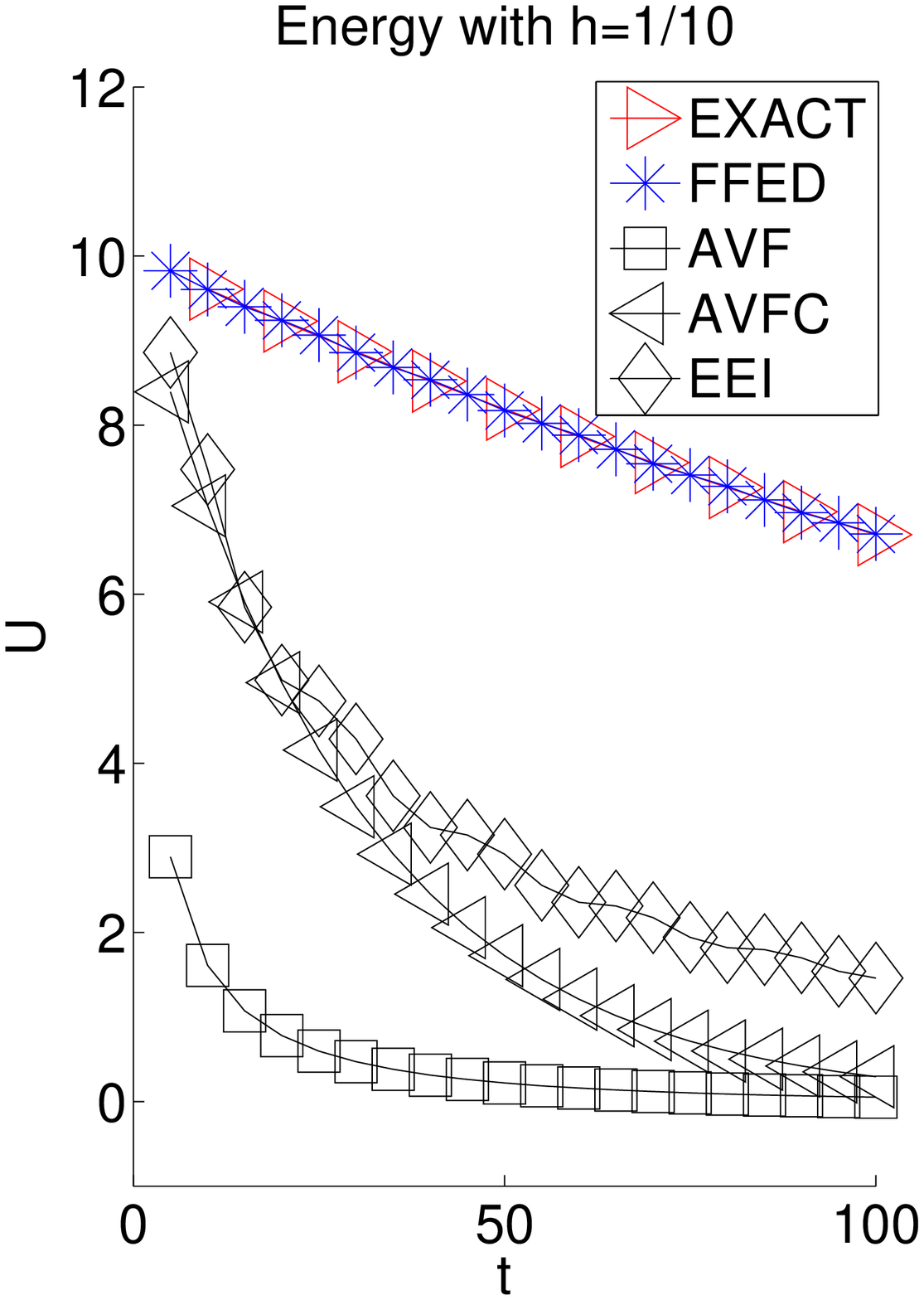}
\includegraphics[width=4.5cm,height=7cm]{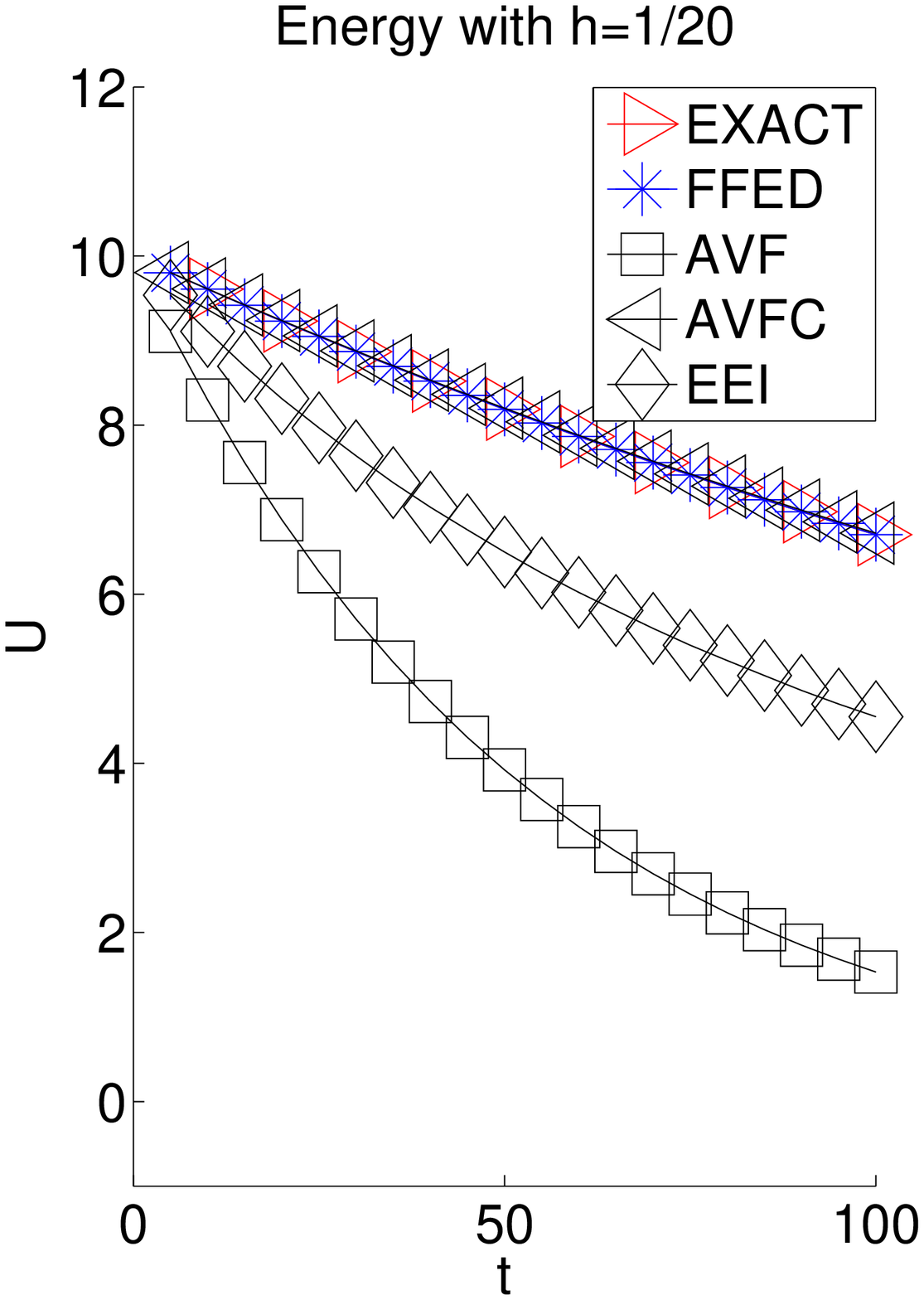}
\includegraphics[width=4.5cm,height=7cm]{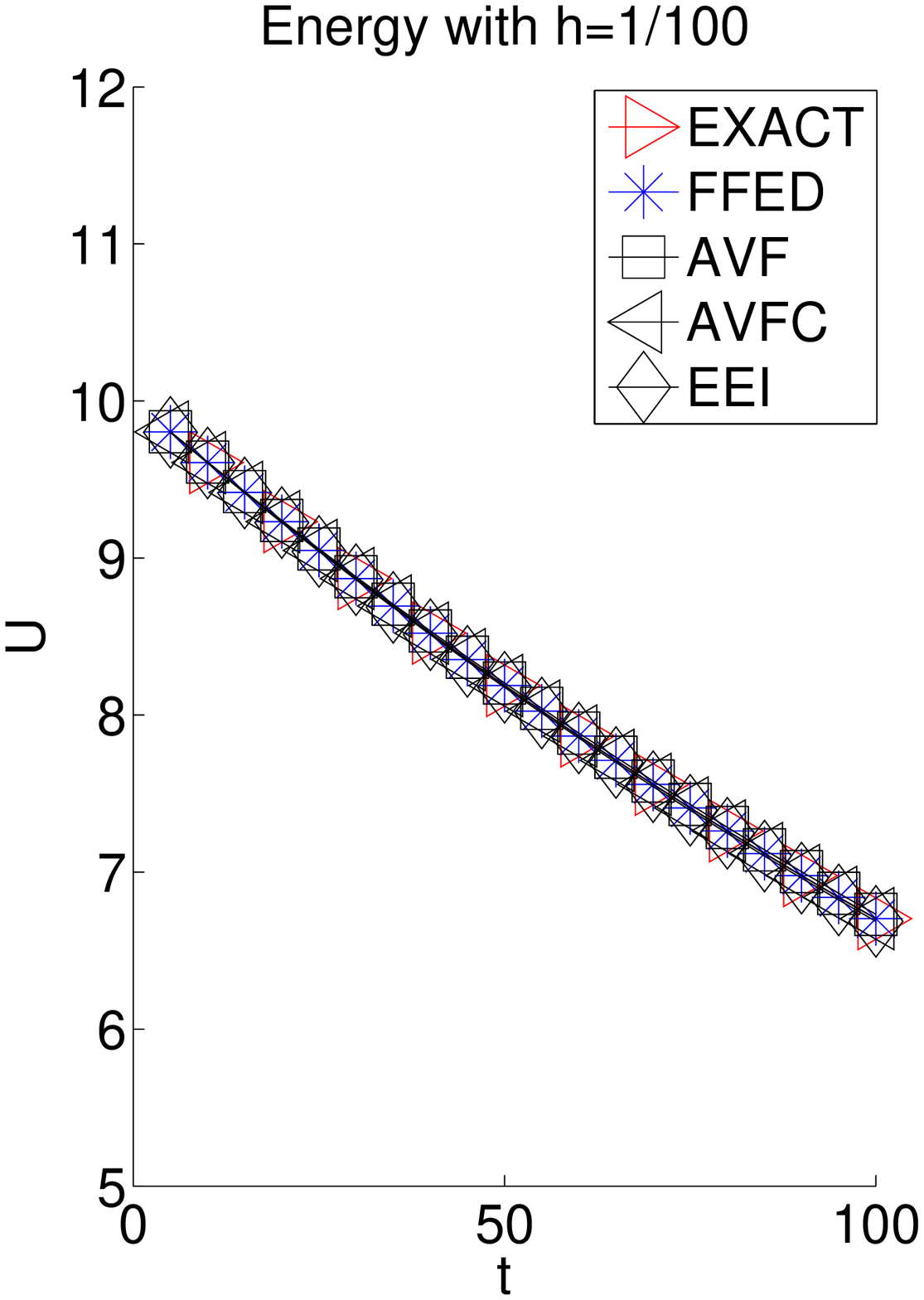}
\caption{The results of the energy $U$ against the time $t$.}
\label{p1-1}
\end{figure}

\begin{figure}[ptb]
\centering
\includegraphics[width=4.5cm,height=7cm]{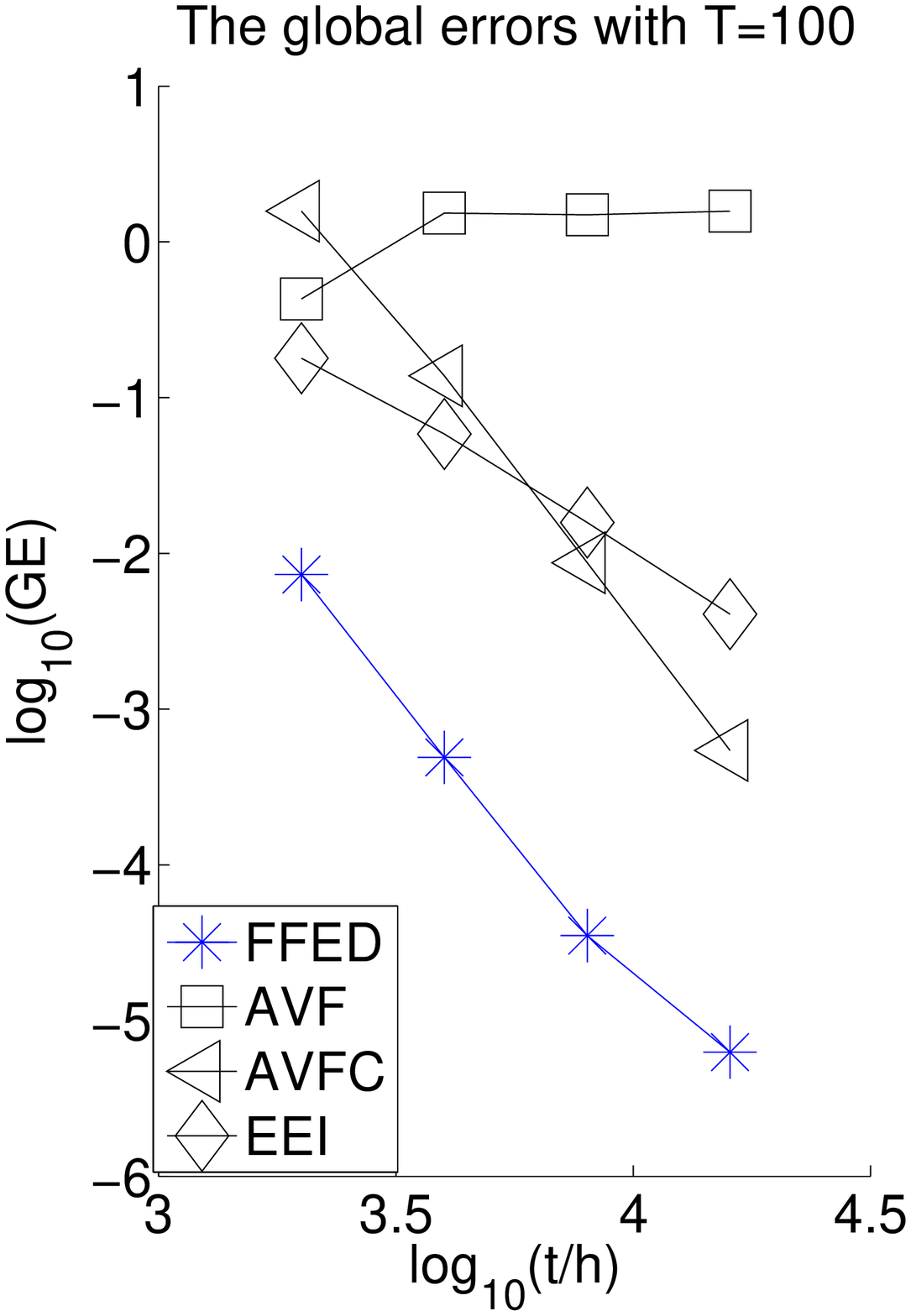}
\includegraphics[width=4.5cm,height=7cm]{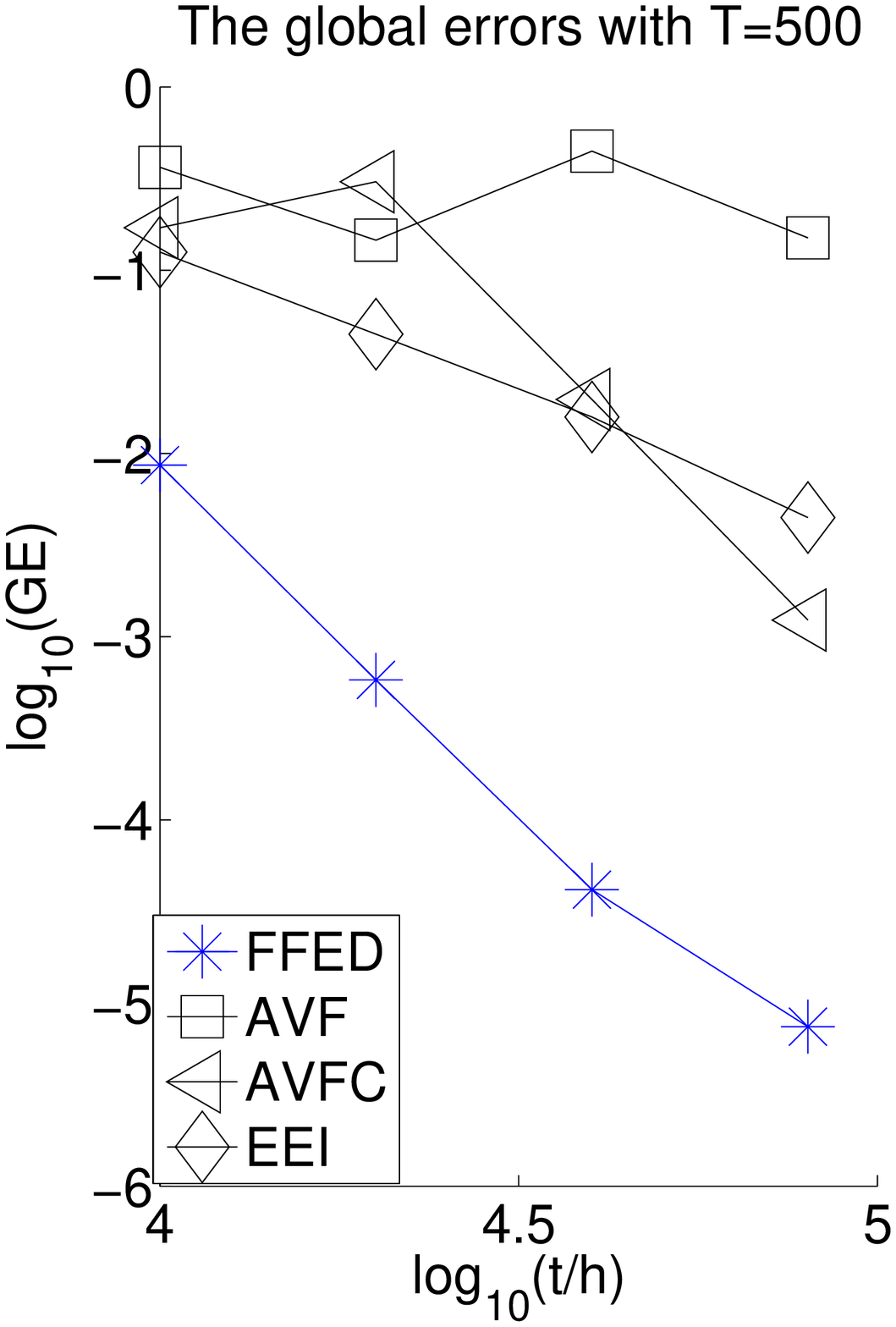}
\includegraphics[width=4.5cm,height=7cm]{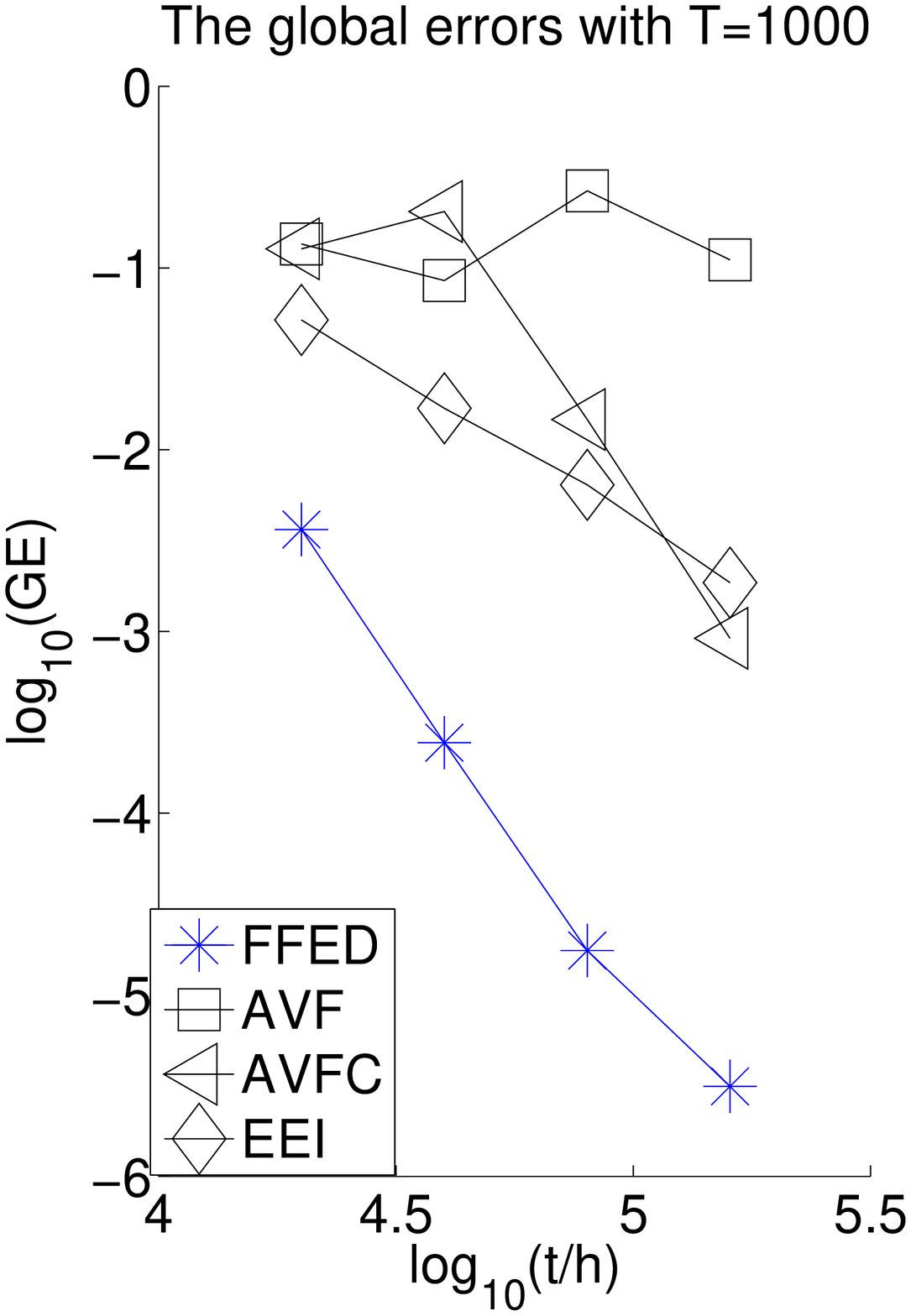}
\caption{The logarithm of the  global errors against the logarithm
of $t/h$.} \label{p1-2}
\end{figure}

\section{Conclusions}\label{sec-conc}
In this letter we derived a novel kind of functionally fitted
energy-diminishing methods for solving gradient systems. The
properties of the methods have been analysed.  It was shown that the
 arbitrary-order  methods are unconditionally energy-diminishing and  achieve
damping for stiff gradient systems. We also discussed the
implementations of the methods. The remarkable efficiency of the
 methods was demonstrated  by
a numerical test in comparison with three existing numerical methods
in the literature.

\section*{Acknowledgement}
The research of the first author is supported in part by the
Alexander von Humboldt Foundation and by the  Natural Science
Foundation of Shandong Province (Outstanding Younth Foundation).

\end{document}